\documentclass[11pt]{amsproc}

\usepackage{amssymb}
\usepackage{mathtools}
\usepackage[dvipsnames]{xcolor}
\usepackage{fullpage}
\usepackage[hypertexnames=false]{hyperref}
\usepackage{aliascnt}

\usepackage[capitalize,nameinlink,noabbrev,nosort]{cleveref}
\hypersetup{
  colorlinks=true,
  allcolors=CarnationPink
}

\crefname{claim}{Claim}{Claims}

\makeatletter
\patchcmd{\@maketitle}
  {\@settitle}
  {\vspace*{-2.5cm}\@settitle}
  {}{}
\makeatother

\theoremstyle{plain}
\newtheorem{theorem}{Theorem}[section]

\newaliascnt{proposition}{theorem}
\newtheorem{proposition}[proposition]{Proposition}
\aliascntresetthe{proposition}

\newaliascnt{lemma}{theorem}
\newtheorem{lemma}[lemma]{Lemma}
\aliascntresetthe{lemma}

\newaliascnt{claim}{theorem}
\newtheorem{claim}[claim]{Claim}
\aliascntresetthe{claim}

\numberwithin{equation}{section}

\newcommand{\bbF}{\mathbb{F}}
\newcommand{\bbZ}{\mathbb{Z}}
\newcommand{\bbQ}{\mathbb{Q}}

\DeclareMathOperator{\Res}{Res}
\begin{document}

\title{Irreducibility of truncations of the Catalan generating function}

\author{Shin-ichiro Seki}
\thanks{This research was supported by JSPS KAKENHI Grant Number JP26K06734.}
\address{Nagahama Institute of Bio-Science and Technology, 1266, Tamura, Nagahama, Shiga, 526-0829, Japan}
\email{s\_seki@nagahama-i-bio.ac.jp}
\subjclass[2020]{Primary 11R09; Secondary 11C08}
\keywords{Catalan numbers, irreducible polynomials,
truncations of power series}
\begin{abstract}
We prove that, for every positive integer $n$, the polynomial
\[
C_0+C_1x+\cdots+C_nx^n
\]
is irreducible over $\bbQ$, where $C_k$ denotes the $k$th Catalan number.
\end{abstract}
\maketitle
\section{Introduction}
The \emph{Catalan numbers} are defined by
\[
C_k\coloneqq\frac{1}{k+1}\binom{2k}{k}
\]
for each nonnegative integer $k$:
\begin{gather*}
C_0=1,\quad C_1=1,\quad C_2=2,\quad C_3=5,\quad
C_4=14,\quad C_5=42,\quad C_6=132,\quad C_7=429,\\
C_8=1430,\quad C_9=4862,\quad C_{10}=16796,\quad
C_{11}=58786,\quad C_{12}=208012,\quad \dots.
\end{gather*}
Their generating function is given by
\[
\sum_{k=0}^{\infty}C_kx^k=\frac{1-\sqrt{1-4x}}{2x}
\]
and has radius of convergence $1/4$.
For each nonnegative integer $n$, we consider the degree $n$ truncation
\[
P_n(x)\coloneqq\sum_{k=0}^nC_kx^k\in\bbZ[x].
\]
In this paper, we prove the following.
\begin{theorem}\label{thm:main}
For every positive integer $n$, the polynomial $P_n(x)$ is irreducible over $\bbQ$.
\end{theorem}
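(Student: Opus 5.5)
We would argue via $p$-adic Newton polygons, in the style of Schur's and Coleman's treatment of truncated exponentials. A useful first step is to use the reversed polynomial $x^nP_n(1/x)=\sum_{k=0}^n C_{n-k}x^k$, which has the same irreducibility behaviour as $P_n$. The leading coefficient of $P_n$ is $C_n$. We would choose a prime $p$ such that the $p$-adic valuations of $C_0,\dots,C_n$ make the Newton polygon of $P_n$ at $p$ force every factor to have large degree. By Dumas' theorem, the Newton polygon of a product is obtained by concatenating the edges of the factors' polygons.

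Concretely, we would take a prime $p$ with $n/2<p\le n$; by Bertrand's postulate one exists once $n\ge 2$. Kummer's theorem gives $v_p(C_k)$ exactly: for $k<p$ we have $v_p(C_k)=0$, except that $v_p(C_k)=1$ when $p\le 2k<2p$, i.e.\ $p/2\le k<p$. For $p\le k\le n<2p$ we have $v_p(\binom{2k}{k})$ equal to $0$ or $1$, with $k+1$ possibly divisible by $p$. The plan is to read off, from these explicit small valuations, a Newton polygon for $P_n$ (or its reversal) with a final edge whose slope has denominator $>n/2$. Any factor must then have degree $>n/2$, and since this applies to every factor, the polynomial is irreducible.

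Because the valuations are only $0$ and $1$, the Newton polygon is very flat, and we expect a single prime to give only a partial constraint. A factor of degree $d$ must have $d$ equal to a sum of horizontal lengths of segments taken from each edge, with lattice-point compatibility. We therefore intend to combine two pieces of information. The first is the $p$-adic constraint from a prime $p\in(n/2,n]$, which in the region $k\ge p/2$ has valuations equal to $1$; this yields an edge from $(\lceil p/2\rceil,\cdot)$ onward. The second is a constraint at the prime $2$: $v_2(C_k)=s_2(k+1)-1$, where $s_2$ is the binary digit sum, so $v_2(C_k)=0$ exactly when $k=2^m-1$. This gives a rigid $2$-adic polygon whose vertices lie near powers of $2$. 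For each candidate factor degree $d$, we would show that at least one prime forbids $d$. Small $n$ would be checked directly.

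We expect the combinatorial step to be the main obstacle: showing that, for every $n$, the allowed degree sets from the available primes intersect only in $\{0,n\}$. This will likely need a careful choice of two or three primes, such as a prime slightly above $n/2$, the prime $2$, and possibly a prime near $2n/3$, together with explicit bookkeeping of the Newton polygon vertices. As a fallback, if the Newton polygon constraints are not enough, we would add an analytic argument. The roots of $P_n$ lie near the circle $|x|=1/4$, with $|x|\ge$ roughly $1/4$ by an Eneström–Kakeya-type bound, since $C_{k+1}/C_k<4$. Any factor's constant term is then a nonzero integer dividing $1$, so it equals $\pm1$. This forces the product of its root moduli to be $1/|\text{leading coeff}|$, and it is the leading coefficient that the $p$-adic information controls.
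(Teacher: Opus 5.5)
There is a genuine gap, and it cannot be closed with more careful bookkeeping. For every prime $p$ and every $n$, the $p$-adic Newton polygon of $P_n$ (or of its reversal, which is the mirror image) imposes \emph{no} restriction on the degree of a factor.

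Since $v_p(C_0)=0$ and all valuations are nonnegative, the polygon begins with a horizontal edge from $(0,0)$ to $(k_0,0)$, where $k_0$ is the largest $k\le n$ with $p\nmid C_k$. One always has $k_0\ge n/2$:
\begin{itemize}
\item For $p=2$, take $k=2^j-1$ with $2^j\le n+1<2^{j+1}$.
\item For odd $p$, write $p^j\le n+1<p^{j+1}$ and $c=\lfloor (n+1)/p^j\rfloor$. If $c\le (p-1)/2$, take $k=cp^j-1$: there are exactly $j$ carries in $k+k$, and $v_p(k+1)=j$.
\item Otherwise take $k=(p^{j+1}-1)/2$: all its digits are $\le (p-1)/2$, so there are no carries and $p\nmid k+1$.
\end{itemize}

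A horizontal edge can be shared out arbitrarily among the factors. So every $d\le k_0$, and by symmetry every $d\ge n-k_0$, is an admissible factor degree. Since $k_0\ge n/2$, that is every $d\in\{0,\dots,n\}$.

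Consequences for your plan:
\begin{itemize}
\item The step ``for each $d$, some prime forbids $d$'' is impossible, however many primes you combine. The primes in $(n/2,n]$ and the prime $2$ give literally nothing.
\item The non-horizontal edges always have total length at most $n/2$, so the ``final edge with slope denominator $>n/2$'' you hope for never exists.
\item Even if it did, Dumas' theorem would force only \emph{one} factor to have large degree, not every factor.
\item A smaller error: $v_p(C_{p-1})=0$, not $1$, since $p\mid k+1$ there.
\item Your analytic fallback bounds the leading coefficient of a degree-$d$ factor by $4^d$ in absolute value, but yields no contradiction on its own.
\end{itemize}

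The missing ingredient is control over how the $p$-adic unit roots factor, which Newton polygons cannot see. The paper passes to $Q_n(x)=x^nP_n(1/x)$ and uses primes $n+1<p\le 2n$. For these, $Q_n\equiv x^{n-\frac{p-1}{2}}\overline{Q_{\frac{p-1}{2}}}\pmod p$, and $2\overline{Q_{\frac{p-1}{2}}}=x^{\frac{p+1}{2}}-(x-4)^{\frac{p+1}{2}}$ divides $2(x^p+x-4)$. Hence Frobenius acts on the roots as $\gamma\mapsto 4-\gamma$. Every monic factor $U$ mod $p$ therefore satisfies $U(4-x)=(-1)^{\deg U}U(x)$ and has root sum $2\deg U$.

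These congruences hold simultaneously for all such $p$. They are lifted to $\mathbb{Z}$ by playing $\Pi_n>2^{6n/5}$ against the archimedean bound $31/8<|\alpha|<4$ on the roots of $Q_n$. This first pins down root sums, then forces $\Pi_n\mid G(0)$ and $\deg F<2n/5$, and finally kills the odd coefficients of $F(x+2)$. The result is $F(4-x)=(-1)^{\deg F}F(x)$ exactly. Then $|W(\alpha)|<1$ for $W=x^2-4x+16$ makes $\Res(F,W)$ a nonzero integer of absolute value less than $1$.

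None of these three ingredients appears in your proposal: the mod-$p$ involution, the lifting through the product of many primes, and the resultant contradiction.
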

This was conjectured by Zhi-Wei Sun on March 23, 2013, in OEIS \href{https://oeis.org/A000108}{A000108}.

There are other known examples of formal power series whose nonconstant truncations are all irreducible over $\bbQ$.
A classical example is the exponential series $\exp(x)$.
\begin{theorem}[Schur \cite{Schur1929}]
For every positive integer $n$, the polynomial
\[
E_n(x)\coloneqq1+x+\frac{x^2}{2!}+\cdots+\frac{x^n}{n!}
\]
is irreducible over $\bbQ$.
\end{theorem}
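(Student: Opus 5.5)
The statement above is Schur's classical theorem. It is quoted here rather than reproved, but I would prove it by combining $p$-adic Newton polygons (Coleman's approach) with the Sylvester--Schur theorem on prime factors of products of consecutive integers.

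First, clear denominators. Set $F_n(x)\coloneqq n!\,E_n(x)=\sum_{j=0}^n a_jx^j$ with $a_j=n!/j!$, so $a_n=1$ and $a_0=n!$. Suppose $F_n$ has a factor over $\bbQ$ of degree $k$ with $1\le k\le n/2$; replacing a factor by its cofactor if necessary, we may assume this range. The goal is a prime $p$ for which the $p$-adic Newton polygon of $F_n$ forbids a factor of degree $k$. The candidate is supplied by Sylvester's theorem (in Schur's form): for $n\ge 2k$, the product $n(n-1)\cdots(n-k+1)$ has a prime divisor $p>k$. Fix such a $p$, and note that $p\mid a_{n-k}=n(n-1)\cdots(n-k+1)$.

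Next, analyse the Newton polygon of $F_n$ at $p$, using the points $(j,v_p(a_j))$ with $v_p(a_j)=v_p(n!)-v_p(j!)$. By Legendre's formula, $v_p(m!)=(m-s_p(m))/(p-1)$, where $s_p$ is the base-$p$ digit sum. Two facts should follow:
\begin{itemize}
\item[(a)] every edge of the polygon has slope (in absolute value) strictly less than $1/(p-1)$, because $v_p(n!)-v_p(j!)<(n-j)/(p-1)$;
\item[(b)] $p\mid n-i$ for some $0\le i<k$, so the point $(n-i,\,v_p(n!)-v_p((n-i)!))$ sits strictly above height $0$ at the right end.
\end{itemize}
From (a) and (b), the rightmost edges of the polygon (those ending at $(n,0)$) rise by at least $v_p(n!/(n-i)!)\ge 1$, yet each has slope less than $1/(p-1)$.

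Finally, apply Dumas' theorem: the Newton polygon of a product is obtained by concatenating the edges of the factors' polygons. Consider an edge of slope $\lambda=h/e$ in lowest terms. Any factor contributes to it a segment whose horizontal length is a multiple of $e$. Since $0<\lambda<1/(p-1)$, we get $e>(p-1)h\ge p-1$, and a sharper count should give $e\ge p$. The aim is to show that every factor's polygon must contain a segment of positive slope near the right end, so that every factor has degree at least $p>k$, contradicting the degree $k$ of our factor.

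The step I expect to be the main obstacle is that last one: forcing every irreducible factor to pick up a positive-slope portion. A factor could a priori contain only horizontal (slope $0$) pieces. I would handle this by reversing the polynomial, i.e.\ considering $x^nF_n(1/x)$, or by working from the left end, where $v_p(a_0)=v_p(n!)$ is maximal. There the whole polygon lies above the chord structure, and Filaseta's refinement of Coleman's lemma says that all lattice points on the polygon have $x$-coordinates congruent to $n$ modulo $p$ up to the last edge. This forces any factor's degree to be a multiple of $p$, or at least $\ge p$, which gives the contradiction. The small case $k=1$ (no rational roots) is also covered by this argument, and it can alternatively be checked directly via the rational root theorem.
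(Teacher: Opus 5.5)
The paper does not prove this theorem; it only cites Schur, so your argument has to stand on its own. Your route (Sylvester's theorem plus $p$-adic Newton polygons, in the style of Coleman and Filaseta) is the right kind of argument. As written, though, it has a genuine gap exactly where you suspect: nothing you state forces the degree-$k$ factor $g$ (monic in $\bbZ[x]$ by Gauss's lemma) to have a root of positive $p$-adic valuation, i.e.\ to use a non-horizontal piece of the polygon.

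The remedy you sketch does not supply this. Reversing the polynomial merely reflects the polygon. The lattice-point statement you attribute to Filaseta is misstated. In any case, ``any factor's degree is a multiple of $p$, or at least $\ge p$'' cannot follow from polygon considerations alone, since a factor all of whose roots are $p$-adic units sits entirely on the slope-$0$ edge.

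The missing input is the length of that edge. Since $p\mid n-i$ with $0\le i<k<p$, none of $n,n-1,\dots,n-i+1$ is divisible by $p$. So $v_p(a_j)=0$ for $n-i\le j\le n$, while $v_p(a_j)\ge 1$ for $j\le n-i-1$. Equivalently, $\overline{F_n}(x)=x^{n-i}u(x)$ in $\bbF_p[x]$ with $\deg u=i$. As $\deg\overline{g}=k>i$, we have $\overline{g}\nmid u$, so $x\mid\overline{g}$, i.e.\ $p\mid g(0)$. Now (a) says every root $\alpha$ of $F_n$ in $\overline{\bbQ_p}$ has $0\le v_p(\alpha)<1/(p-1)$, hence
\[
1\le v_p(g(0))=\sum_{g(\alpha)=0}v_p(\alpha)<\frac{k}{p-1}\le 1,
\]
a contradiction. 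In your Dumas language: $g$ must contain a non-horizontal segment, and its horizontal length is a multiple of $e\ge p>k$. No refinement of Coleman's lemma is needed.

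Two supporting claims also need repair.
\begin{itemize}
\item The justification you give for (a), namely $v_p(n!)-v_p(j!)<(n-j)/(p-1)$, is false; it already fails for $j=n-1$ whenever $p\mid n$. The correct reason works from the left endpoint. The chord from $(0,v_p(n!))$ to $(j,v_p(a_j))$ has slope $-v_p(j!)/j>-1/(p-1)$, because $v_p(j!)<j/(p-1)$. So the steepest edge has slope $>-1/(p-1)$, and by convexity every edge does.
\item Claim (b) is off by one. The condition $p\mid n-i$ gives $p\mid a_j$ only for $j\le n-i-1$, whereas $v_p(a_{n-i})=v_p\bigl(n(n-1)\cdots(n-i+1)\bigr)=0$. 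So the point you name lies at height $0$, not strictly above it. This is precisely what makes the slope-$0$ edge have length exactly $i$, which the fix above relies on.
\end{itemize}
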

Schur not only proved the irreducibility of these polynomials but also determined their Galois groups.
\begin{theorem}[Schur \cite{Schur1930}]
For every positive integer $n$, the Galois group of $E_n(x)$ over $\bbQ$ is isomorphic to the alternating group $A_n$ if $n$ is divisible by $4$, and to the symmetric group $S_n$ otherwise.
\end{theorem}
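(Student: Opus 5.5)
Since the irreducibility of $E_n(x)$ is the content of Schur's earlier theorem \cite{Schur1929}, I would take it as given. I would then work with $f(x)\coloneqq n!\,E_n(x)=\sum_{k=0}^n \frac{n!}{k!}x^k\in\bbZ[x]$, which is monic of degree $n$, and let $G\subseteq S_n$ be its Galois group. The proof splits into two independent parts: (i) show $G\subseteq A_n$ exactly when $4\mid n$, via the discriminant; (ii) show $G\supseteq A_n$ always, via a $p$-adic Newton polygon argument and Jordan's theorem.

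\emph{Discriminant.} From $E_n'=E_n-x^n/n!$ we get $f'(\alpha)=-\alpha^n$ at every root $\alpha$ of $f$. Hence
\[
\operatorname{disc}(f)=(-1)^{n(n-1)/2}\prod_\alpha f'(\alpha)=(-1)^{n(n-1)/2}(-1)^n\Bigl(\prod_\alpha\alpha\Bigr)^{n}.
\]
Since $\prod_\alpha\alpha=(-1)^n n!$, this equals $\pm(-1)^{n(n-1)/2}(n!)^n$, where I still need to pin down the overall sign carefully. For $n\ge2$, Bertrand's postulate shows that $n!$ is not a square. Therefore $(n!)^n$ is a square iff $n$ is even. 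When $n$ is even, the sign works out to $(-1)^{n/2}$ (to be verified), so $\operatorname{disc}(f)$ is a rational square iff $4\mid n$. This gives $G\subseteq A_n$ iff $4\mid n$.

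\emph{Large Galois group.} Pick a prime $p$ with $n/2<p\le n$. Every coefficient $n!/k!$ has $p$-adic valuation $1$ for $k<p$ and $0$ for $k\ge p$. So the $p$-adic Newton polygon of $f$ has a segment from $(0,1)$ to $(p,0)$ of slope $-1/p$, with the remaining part horizontal. Consequently $f$ has a factor over $\bbQ_p$ of degree $p$ whose roots generate a totally ramified extension of ramification index $p$. The inertia group at a prime above $p$ therefore contains an element of order $p$. Because $p>n/2$, such an element of $S_n$ must be a $p$-cycle. Now $G$ is transitive by irreducibility. A block system for $G$ with blocks of size $d$, $1<d<n$, is incompatible with a $p$-cycle when $p>n/2$: the cycle would have to permute at least $p$ points inside at most $n/p<2$ moved blocks. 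Hence $G$ is primitive. By Jordan's theorem, a primitive group containing a $p$-cycle with $p\le n-3$ contains $A_n$.

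\emph{Main obstacle.} The hardest step is securing a prime $p$ with $n/2<p\le n-3$. For large $n$ (say $n\ge 8$) this follows from standard strengthenings of Bertrand's postulate, e.g.\ a prime in $(m,\tfrac{6}{5}m]$ for $m\ge25$, plus a short table check. The small cases $n\le7$ must be handled directly. For $n\le 3$ the claim follows from irreducibility and the discriminant alone. For $4\le n\le7$ I would combine the available $p$-cycles (for instance $p=n$ or $p=n-1$ when prime) with transitivity and the discriminant parity, or else compute factorizations modulo small primes to exhibit enough cycle types.
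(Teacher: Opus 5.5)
The paper does not prove this theorem; it only cites Schur and mentions Coleman's Newton-polygon proof, so there is no in-paper argument to compare against. Your outline is a sound argument of that Newton-polygon type: irreducibility from Schur's 1929 theorem, the discriminant, a $p$-adic Newton polygon at a prime $p\in(n/2,n]$ giving a $p$-cycle, then primitivity and Jordan. The sign you left open does work out. Since $(-1)^n\bigl((-1)^n n!\bigr)^n=(n!)^n$, you get $\operatorname{disc}(f)=(-1)^{n(n-1)/2}(n!)^n$, whose sign is $(-1)^{n/2}$ for even $n$. Your block argument is right in substance but should be stated cleanly. Either the $p$-cycle moves some block, hence moves $p$ blocks and needs $pd\ge 2p>n$ points. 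Or it fixes every block, and then its support of size $p>n/2$ lies in a single block.

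The genuine gap is $n=6$, where your first suggestion (combine the available $p$-cycles with transitivity and discriminant parity) cannot work. The only prime in $(3,6]$ is $5=n-1$, so Jordan's theorem does not apply. Moreover, $\mathrm{PGL}_2(\bbF_5)$ acting on the six points of $\mathbb{P}^1(\bbF_5)$ is a $3$-transitive subgroup of $S_6$ of order $120$. It contains the $5$-cycle $x\mapsto x+1$ and the odd permutation $x\mapsto 2x$, a $4$-cycle fixing $0$ and $\infty$. So $5$-cycles, transitivity and a nonsquare discriminant do not exclude it, and you must actually carry out your second option. For instance, $7\nmid\operatorname{disc}(f)=-720^6$ and
\[
6!\,E_6(x)\equiv(x-5)(x^2+2)(x^3+4x^2+6x+5)\pmod 7,
\]
with the quadratic and cubic factors irreducible over $\bbF_7$. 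Hence $G$ contains an element of cycle type $(3,2,1)$, and its cube is a transposition. A primitive group containing a transposition is $S_6$, and primitivity comes from the $5$-cycle as in your argument.

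The other small cases do go through as you suggest:
\begin{itemize}
\item For $n=4$, the $3$-cycle from $p=3$ and transitivity force $G=A_4$, given $G\subseteq A_4$.
\item For $n=5$, the $3$-cycle from $p=3$ and primitivity give $A_5\subseteq G$.
\item For $n=7$, the $5$-cycle from $p=5$ gives $5\mid |G|$, and the only transitive subgroups of $S_7$ of order divisible by $5$ are $A_7$ and $S_7$.
\end{itemize}
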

Coleman \cite{Coleman} gave an alternative proof of this theorem using Newton polygons.

Katz and Rivin \cite{KatzRivin} used Magma to carry out systematic computations of the Galois groups of truncations of a wide range of formal
power series with rational coefficients.
Among other results, they verified computationally that the Galois group of $P_n(x)$ over $\bbQ$ is isomorphic to the symmetric group $S_n$ for $1\leq n\leq 2000$; see \cite[\S4]{KatzRivin}.
In particular, the irreducibility of $P_n(x)$ over $\bbQ$ was already known in this range.
Sun conjectured in OEIS \href{https://oeis.org/A224416}{A224416} that the Galois group of $P_n(x)$ over $\bbQ$ is isomorphic to $S_n$ for every positive integer $n$.
We do not address this stronger conjecture in the present paper.
\section{Nuts and bolts}
In this section, we prepare two tools for use later in the proof.
For each nonnegative integer $n$, define the monic polynomial $Q_n(x)\in\bbZ[x]$ by
\[
Q_n(x)\coloneqq x^nP_n(1/x)=\sum_{k=0}^nC_kx^{n-k}.
\]
Since $P_n(x)$ is irreducible over $\bbQ$ if and only if $Q_n(x)$ is, we work with $Q_n(x)$ in what follows.
\subsection{Location of the roots of $Q_n(x)$}
For each positive integer $n$, define
\[
\Delta_n\coloneqq\left(\frac{31}{8}\right)^{n+1}+\sum_{j=1}^n\frac{6C_{n-j}}{n-j+2}\left(\frac{31}{8}\right)^j.
\]
\begin{lemma}\label{lem:31/8}
For every $n\geq 320$, $\Delta_n<4C_n$.
\end{lemma}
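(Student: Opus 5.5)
The plan is to divide through by $C_n$ and compare the two parts of $\Delta_n$ with $C_n$ separately. Put $r\coloneqq 31/8$ and $m\coloneqq n-j$. Then
\[
\frac{\Delta_n}{C_n}=\frac{r^{n+1}}{C_n}+\frac{1}{C_n}\sum_{j=1}^{n}T_j,\qquad T_j\coloneqq\frac{6C_{n-j}}{n-j+2}\,r^j .
\]
The only arithmetic input is the ratio $C_{m+1}/C_m=2(2m+1)/(m+2)$. This ratio exceeds $r$ exactly when $(4-r)m>2r-2$, i.e.\ when $m>46$. It tends to $4>r$, which is what drives the lemma.

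\emph{First term.} For $n\ge 47$ we have $C_{n+1}/C_n>r$, so $r^{n+1}/C_n$ is decreasing in $n$ from then on. It therefore suffices to bound it at $n=320$. The asymptotic $C_n\sim 4^n/(\sqrt{\pi}\,n^{3/2})$ suggests a value of about $1.5$. To make this rigorous, I would use an explicit lower bound such as $C_n\ge 4^n/(\sqrt{\pi}\,(n+1)n^{1/2}\cdot c)$ with a clean constant $c$, or simply evaluate $r^{321}/C_{320}$ exactly, since it is a finite rational computation. The target is a bound like $r^{n+1}/C_n\le 1.6$ for all $n\ge 320$.

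\emph{The sum, split at $m=M$.} Successive terms satisfy
\[
\frac{T_{j+1}}{T_j}=r\,\frac{C_{m-1}}{C_m}\cdot\frac{m+2}{m+1}=\frac{r(m+2)}{2(2m-1)}\qquad(m=n-j).
\]
This is increasing as $m$ decreases, but it stays at most some $q<1$ for all $m\ge M$. For instance, $M=100$ gives $q\le 0.995$; a larger $M$ brings $q$ closer to $31/32$ and is better.
\begin{itemize}
\item For $j\le n-M$, bound the block by a geometric series: $\sum T_j\le T_1/(1-q)$. Here $T_1=6rC_{n-1}/(n+1)=3rC_n/(2n-1)$, so this block contributes at most $3r/((2n-1)(1-q))$ to $\Delta_n/C_n$. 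With $q$ near $31/32$ and $n\ge 320$ this is about $0.6$, and it decreases in $n$.
\item For $m<M$, factor out $r^{n+1}$. The block contributes at most $\frac{r^{n+1}}{C_n}\sum_{m=0}^{M-1}\frac{6C_m}{(m+2)r^{m+1}}$. The finite sum is an explicit small constant $c$: the terms are tiny, e.g.\ of order $10^{-3}$ near $m=46$. This block is then absorbed as $(1+c)\,r^{n+1}/C_n$.
\end{itemize}

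Combining the three pieces gives
\[
\frac{\Delta_n}{C_n}\le (1+c)\frac{r^{n+1}}{C_n}+\frac{3r}{(2n-1)(1-q)},
\]
where each term on the right is nonincreasing for $n\ge 320$. The lemma thus reduces to one numerical check at $n=320$, which should give roughly $1.6(1+c)+0.6<4$.

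The main obstacle is bookkeeping the constants so that this final inequality is honestly verified. That means choosing $M$ and $q$ consistently, and certifying $r^{321}/C_{320}$ and $c$ by exact rational computation or a rigorous Stirling-type bound rather than asymptotics. The margin (about $2.2$ against $4$) appears comfortable. If it turns out tighter, I would first enlarge $M$, which pushes $q$ down toward $31/32$ at the cost of a slightly larger but still negligible $c$.
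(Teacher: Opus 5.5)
There is a genuine gap: the single numerical check at $n=320$ that your argument reduces to is false, because the lemma is extremely tight there. By the paper's computation, $\Delta_{320}/C_{320}\approx 4\cdot 0.99325\approx 3.973$, and the inequality already fails at $n=319$. So any valid upper bound must be within about $0.7\%$ of the true value. Your estimate ``$1.6(1+c)+0.6\approx 2.2$'' lies \emph{below} the true value, which shows the pieces are miscomputed.

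\emph{The constant $c$.} The sum $c=\sum_{m<M}\frac{6C_m}{(m+2)r^{m+1}}$ is not small. It is dominated by the smallest $m$, not by $m\approx 46$: the terms $m=0,1$ are $3/r=24/31\approx 0.774$ and $2/r^2=128/961\approx 0.133$, and $c\approx 1.07$ already for $M=100$.

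\emph{The ratio $q$.} It cannot be near $31/32$. The block $j\le n-M$ requires $M\le n-1=319$, and $\frac{r(m+2)}{2(2m-1)}$ decreases in $m$, so $q\ge \frac{r(n+1)}{2(2n-1)}\approx 0.976$. Hence $\frac{3r}{(2n-1)(1-q)}\ge 0.77$ for every admissible $M$. For your $M=100$, $q\approx 0.993$ and this term is about $2.5$, not $0.6$.

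\emph{The final check.} With $r^{321}/C_{320}\approx 1.527$, your right-hand side is about $5.7$ for $M=100$. Even the best choice of $M$ (near $220$) gives about $4.15>4$. The infinite geometric bound $T_1/(1-q)$ always overshoots the true sum by more than the available margin of roughly $0.027$: it adds a phantom tail beyond $j=n-M$ and replaces every ratio by the worst one. Enlarging $M$ does not help, contrary to your fallback plan.

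What is missing is an argument that loses essentially nothing at $n=320$. The paper uses the exact identity $\Delta_{n+1}=r\Delta_n+r(4C_n-C_{n+1})$, where $4C_n-C_{n+1}=\frac{6C_n}{n+2}$. Then $\Delta_n<4C_n$ gives $\Delta_{n+1}<r(8C_n-C_{n+1})$. This is $<4C_{n+1}$ as soon as $C_{n+1}/C_n>8r/(4+r)=248/63$, which holds for all $n\ge 93$ and in particular for $n\ge 320$. The only numerical input is then the exact value $\Delta_{320}/(4C_{320})\approx 0.99325$.

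Your splitting could at best be rescued by replacing $T_1/(1-q)$ with the finite geometric sum $T_1(1-q^{n-M})/(1-q)$ and taking $M$ close to $n$. That brings the bound to roughly $3.98$. However, this is as delicate as computing $\Delta_{320}$ exactly, and your termwise monotonicity in $n$ would have to be re-examined.
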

\begin{proof}
For $n\geq 320$, we have $C_{n+1}/C_n\geq 641/161$, which implies
\[
\frac{31}{8}(8C_n-C_{n+1})<4C_{n+1}.
\]
Thus, the identity
\[
\Delta_{n+1}=\frac{31}{8}\Delta_n+\frac{31}{8}(4C_n-C_{n+1})
\]
shows that $\Delta_n<4C_n$ implies $\Delta_{n+1}<4C_{n+1}$.
Therefore, it remains to verify that $\Delta_{320}<4C_{320}$.
This can be checked numerically; in fact, $\Delta_{320}/(4C_{320})=0.99325275\dots$.
\end{proof}
This inequality does not hold for $n=319$.
In fact, $\Delta_{319}/(4C_{319})=1.01582917\dots$.
\begin{proposition}\label{prop:location}
Let $n\geq 320$.
Then every complex root $\alpha$ of $Q_n(x)$ satisfies
\[
\frac{31}{8}<|\alpha|\leq 4-\frac{6}{n+1}<4.
\]
\end{proposition}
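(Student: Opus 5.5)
The plan is to get the two bounds separately: the upper bound from the Eneström--Kakeya theorem, and the lower bound from \cref{lem:31/8} applied to $(x-4)Q_n(x)$.

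\emph{Upper bound.} Write $Q_n(x)=\sum_{i=0}^n a_ix^i$ with $a_i=C_{n-i}>0$. The Eneström--Kakeya theorem says every root $\alpha$ satisfies
\[
\min_i\frac{a_i}{a_{i+1}}\leq|\alpha|\leq\max_i\frac{a_i}{a_{i+1}}.
\]
Here $a_i/a_{i+1}=C_m/C_{m-1}$ with $m=n-i\in\{1,\dots,n\}$. From $C_m=\frac{2(2m-1)}{m+1}C_{m-1}$ we get
\[
\frac{C_m}{C_{m-1}}=4-\frac{6}{m+1}.
\]
This is increasing in $m$, so its maximum is at $m=n$, namely $4-\frac{6}{n+1}$, and that value is clearly $<4$. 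The lower bound from Eneström--Kakeya is only $1$, so the left inequality needs a separate argument.

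\emph{Lower bound.} Multiply by $x-4$ and telescope using the recurrence in the form $C_k-4C_{k-1}=-\frac{6C_{k-1}}{k+1}$. Expanding $(x-4)\sum_k C_kx^{n-k}$, collecting the coefficient of $x^{n-k+1}$, and reindexing with $j=n-k+1$ (so $k-1=n-j$ and $k+1=n-j+2$) gives
\[
(x-4)Q_n(x)=x^{n+1}-\sum_{j=1}^{n}\frac{6C_{n-j}}{n-j+2}\,x^j-4C_n .
\]
Now suppose $|x|\leq 31/8$. By the triangle inequality, the first two groups of terms have absolute value at most
\[
\left(\tfrac{31}{8}\right)^{n+1}+\sum_{j=1}^n\frac{6C_{n-j}}{n-j+2}\left(\tfrac{31}{8}\right)^j=\Delta_n .
\]
By \cref{lem:31/8}, $\Delta_n<4C_n$ for $n\geq320$. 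Hence $(x-4)Q_n(x)\neq0$, and since $x\neq4$, also $Q_n(x)\neq0$. Therefore every root of $Q_n$ satisfies $|\alpha|>31/8$.

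The only delicate point is the telescoping computation, and in particular getting the reindexing right so that the resulting coefficients match the definition of $\Delta_n$ exactly. The analytic work has already been done in \cref{lem:31/8}.
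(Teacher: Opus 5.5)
Your proof is correct and follows the paper's argument. The upper bound comes from the Eneström--Kakeya theorem with $C_m/C_{m-1}=4-\frac{6}{m+1}$, and the lower bound comes from the telescoped identity for $(x-4)Q_n(x)$ together with the triangle inequality and \cref{lem:31/8}; the paper writes the identity for $(4-x)Q_n(x)$, which differs only by sign, and your reindexing correctly matches the coefficients in $\Delta_n$.
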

\begin{proof}
The upper bound follows from the Enestr\"om--Kakeya theorem, since
\[
\frac{C_k}{C_{k-1}}=\frac{2(2k-1)}{k+1}=4-\frac{6}{k+1}\leq 4-\frac{6}{n+1}
\]
for $1\leq k\leq n$.
To prove the lower bound, suppose that $Q_n(x)$ has a root $\alpha$ with $|\alpha|\leq 31/8$.
Since
\[
(4-x)Q_n(x)=4C_n+\sum_{j=1}^n\frac{6C_{n-j}}{n-j+2}x^j-x^{n+1},
\]
we have
\[
4C_n=\alpha^{n+1}-\sum_{j=1}^n\frac{6C_{n-j}}{n-j+2}\alpha^j.
\]
By the triangle inequality, we obtain $4C_n\leq\Delta_n$, contradicting Lemma~\ref{lem:31/8}.
\end{proof}
\subsection{A lower bound for a product of primes}
For each positive integer $n$, define
\[
\Pi_n\coloneqq\prod_{\substack{n+1<p\leq 2n \\ p: \text{ prime}}}p.
\]
\begin{proposition}\label{prop:Pi_n-ineq}
For $n\geq 320$, we have $\Pi_n>2^{6n/5}$.
\end{proposition}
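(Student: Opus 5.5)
We need $\sum_{n+1<p\le 2n}\log p > \tfrac{6}{5} n\log 2 \approx 0.8318\,n$. Since $\theta(2n)-\theta(n)\sim n$, the target constant $0.8318$ is below the asymptotic value $1$. The plan is therefore to prove the inequality for large $n$ from explicit Chebyshev-type estimates and to check the remaining finite range by computation.

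\textbf{Step 1: reduce to the Chebyshev function.} Write $\theta(x)=\sum_{p\le x}\log p$. Then
\[
\log \Pi_n=\theta(2n)-\theta(n+1)\ \ge\ \theta(2n)-\theta(n)-\log(n+1),
\]
where the $\log(n+1)$ term accounts for the possibility that $n+1$ is prime. So it suffices to show
\[
\theta(2n)-\theta(n)>\tfrac{6}{5}n\log 2+\log(n+1).
\]

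\textbf{Step 2: large $n$.} I will use the explicit bounds of Rosser and Schoenfeld, or of Dusart. Take, for instance,
\[
\theta(x)<x\Bigl(1+\frac{1}{2\log x}\Bigr)\quad(x>1),\qquad \theta(x)>x\Bigl(1-\frac{1}{2\log x}\Bigr)\quad(x\ge 563).
\]
These give
\[
\theta(2n)-\theta(n)>n-\frac{n}{\log(2n)}-\frac{n}{2\log n}.
\]
For $n\ge 282$ this lower bound is at least about $n(1-0.158-0.089)\approx 0.75\,n$. That falls short of $0.8318\,n$. I would therefore switch to sharper explicit estimates, such as Dusart's $|\theta(x)-x|<0.2\,x/\log^2 x$ for $x\ge 3594641$, or Schoenfeld's bounds with $1/\log x$ replaced by smaller terms. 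With these, the inequality
\[
n\Bigl(1-\varepsilon(n)\Bigr)>0.8318\,n+\log(n+1)
\]
holds for all $n\ge N_0$, with $N_0$ explicit. The main obstacle is picking bounds that make $N_0$ small enough for the finite check to be feasible. Dusart-type estimates valid for $x\ge 10^6$ or so are good enough: for $x\ge 3594641$ the error term is under $0.001\,x$.

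\textbf{Step 3: finite check.} For $320\le n<N_0$, I will verify $\log\Pi_n>\tfrac{6}{5}n\log 2$ directly by computer, sieving primes up to $2N_0$ and updating incrementally. Passing from $n$ to $n+1$ removes the prime $n+2$ if it is prime and adds the primes among $2n+1$ and $2n+2$. This makes the check linear in $N_0$ and trivial for $N_0$ around $10^7$.

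Near $n=320$ the ratio $\log\Pi_n/(n\log 2)$ is only moderately above $1.2$. This is presumably why the threshold $320$, already forced by \cref{lem:31/8}, is used here. I expect the small-$n$ margin to be tight but positive, and the computation confirms it.
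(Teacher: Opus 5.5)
Your proposal is correct and is essentially the paper's argument: explicit Chebyshev-function estimates for large $n$, plus a direct computer check of the remaining range. The only difference is that you dropped the Rosser--Schoenfeld bound after evaluating it only at $n=282$, but the lower bound it gives, $\vartheta(2n)-\vartheta(n+1)>n-1-\frac{n}{\log 2n}-\frac{n+1}{2\log(n+1)}$, already exceeds $\frac{6}{5}n\log 2$ for all $n\geq 4900$; that bound is exactly what the paper uses, so switching to Dusart's estimates remains valid but only enlarges the finite computation.
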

\begin{proof}
Let $\vartheta(x)$ denote the first Chebyshev function, defined by
\[
\vartheta(x)\coloneqq\sum_{p\leq x}\log p.
\]
Rosser and Schoenfeld \cite[Theorem 4]{RosserSchoenfeld} proved that
\[
|\vartheta(x)-x|\leq\frac{x}{2\log x}
\]
for $x\geq 563$.
Since $\log\Pi_n=\vartheta(2n)-\vartheta(n+1)$, we can use this estimate, together with some numerical computations, to obtain the desired inequality.
\end{proof}
\section{Reduction to a functional equation}\label{sec:reduction}
Since the irreducibility of $Q_n(x)$ has been verified computationally for $1\leq n<320$, it suffices to consider the case $n\geq 320$.
Suppose, for a contradiction, that $Q_n(x)$ is reducible over $\bbQ$.
By Gauss's lemma, we may write $Q_n(x)=F(x)G(x)$ with nonconstant monic polynomials $F(x),G(x)\in\bbZ[x]$.
The following claim plays a key role in the proof and will be proved
in the next section.
\begin{claim}[Forced functional equation]\label{claim}
After interchanging $F(x)$ and $G(x)$ if necessary, we have
\begin{equation}\label{eq:forced_fn_eq}
F(4-x)=(-1)^{\deg F}F(x).
\end{equation}
\end{claim}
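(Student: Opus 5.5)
The plan is to extract the symmetry from congruences modulo the primes $p$ counted by $\Pi_n$. Throughout, $n\ge 320$, $Q_n=FG$ with $F,G$ monic and nonconstant, $d=\deg F$, $e=\deg G$, and $p$ is a prime with $n+1<p\le 2n$. Put $m=(p-1)/2$, so $m<n$.

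\emph{Step 1: reduction of $Q_n$ modulo $p$.} For $m<k\le n$ we have $p\mid\binom{2k}{k}$ and $p\nmid k+1$, hence $C_k\equiv 0\pmod p$. For $k\le m$ we have $\binom{2k}{k}\equiv\binom{m}{k}(-4)^k\pmod p$. Summing, and using $m+1=(p+1)/2$, I expect to get
\[
Q_n(x)\equiv c_p\,x^{\,n-m}R_p(x)\pmod p,\qquad R_p(x)\coloneqq x^{m+1}-(x-4)^{m+1},
\]
where $c_p$ is a unit mod $p$. The roots of $R_p$ are the $x$ with $\bigl((x-4)/x\bigr)^{m+1}=1$ and $x\ne 4/(1-1)$. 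So $R_p$ has $m$ distinct nonzero roots $4/(1-\zeta)$, where $\zeta$ runs over the nontrivial $(m+1)$th roots of unity in $\overline{\bbF}_p$. A direct computation gives $R_p(4-x)=(-1)^{m}R_p(x)$, so the involution $\sigma\colon x\mapsto 4-x$ preserves the root set of $R_p$ and acts on it by $\zeta\mapsto\zeta^{-1}$. Consequently
\[
F\equiv x^{a_p}F_p,\qquad G\equiv x^{b_p}G_p \pmod p,
\]
with $a_p+b_p=n-m$ and $F_pG_p=R_p$ up to a unit.

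\emph{Step 2: constant terms and distributing the primes.} By Proposition~\ref{prop:location}, $(31/8)^d<|F(0)|<4^d$ and similarly for $G$. Also $p\mid F(0)$ if and only if $a_p>0$. Every prime counted by $\Pi_n$ divides $C_n=F(0)G(0)$, so $\Pi_n\mid C_n$. Since $C_n<4^n$ and $\Pi_n>2^{6n/5}$ by Proposition~\ref{prop:Pi_n-ineq}, I expect a counting argument to show the following. After swapping $F$ and $G$, the primes with $a_p=0$ (so that $F\equiv F_p$ divides $R_p$) have product exceeding every bound needed below, and for them $G$ carries the whole factor $x^{n-m}$, so $d\le m$. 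The size comparison should also force $d$ to be small relative to $n$, because $|F(0)|>(31/8)^d$ while $F(0)$ is coprime to a large part of $\Pi_n$.

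\emph{Step 3: forcing the symmetry (the main obstacle).} Put $F^*(x)\coloneqq(-1)^dF(4-x)$, which is monic of degree $d$, and $H\coloneqq F-F^*$, which has degree less than $d$. For each good prime, $F\mid R_p$ and $F^*\mid R_p$ modulo $p$. This alone does not make $F\equiv F^*$, since $\sigma$ could move the roots of $F_p$ to other roots of $R_p$. So the crux is to show that, modulo $p$, the root set of $F$ is $\sigma$-stable, i.e.\ closed under $\zeta\mapsto\zeta^{-1}$.

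My first attempt uses that $F$ is defined over $\bbZ$ for all good $p$ at once. I would view $F\bmod p$ as a union of Frobenius orbits on $\mu_{m+1}\setminus\{1\}$, with Frobenius acting as $\zeta\mapsto\zeta^p$. Since $p\equiv -1\pmod{m+1}$, this is exactly $\zeta\mapsto\zeta^{-1}$. Thus every Frobenius-stable subset is automatically $\sigma$-stable, so $F_p(4-x)=\pm F_p(x)$ in $\bbF_p[x]$. Hence $p$ divides every coefficient of $H$.

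Finally I would bound the coefficients of $H$ archimedeanly. The roots of $F$ and $F^*$ have modulus less than $8$, so each coefficient of $H$ is at most about $2\cdot 9^{d}$. If $d$ is small enough from Step 2, this is less than the product of good primes, which forces $H=0$, i.e.\ \eqref{eq:forced_fn_eq}. The delicate point is making the bookkeeping of Step 2 produce enough good primes for the same choice of $F$, rather than for $F$ at some primes and $G$ at others. I expect that balancing, using the numerical margin between $2^{6n/5}$ and the bounds $(31/8)^d$, to be the hardest part.
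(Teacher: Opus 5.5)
Your Step 1 and the Frobenius argument in Step 3 are correct, and they are exactly the paper's local functional equation: a root $\gamma$ of $R_p$ satisfies $\gamma^p=4-\gamma$, so every monic divisor $U$ of $R_p$ in $\bbF_p[x]$ satisfies $U(4-x)=(-1)^{\deg U}U(x)$. The genuine gap is Step 2, which you only expect to work, and a counting argument on constant terms cannot close it. Each prime $p\mid\Pi_n$ has $p\mid C_n$ and $p^2\nmid C_n$, so it divides exactly one of $F(0)$, $G(0)$. Write $\Pi_F,\Pi_G$ for the parts of $\Pi_n$ dividing $F(0)$ and $G(0)$. The only archimedean constraints are $\Pi_F\le|F(0)|<4^{d}$ and $\Pi_G\le|G(0)|<4^{e}$. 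The lower bounds $(31/8)^d$ do not help, because $F(0)$ and $G(0)$ can absorb the primes $\le n+1$ dividing $C_n$. Nothing here rules out $d\approx e\approx n/2$ with $\Pi_F\approx\Pi_G\approx\Pi_n^{1/2}<2^n$ (recall $\Pi_n\mid\binom{2n}{n}<4^n$). In that case the good primes for either factor have product far below the roughly $9^{n/2}=3^n$ that Step 3 needs. Since your Step 3 only uses primes good for $F$, it has no way to exclude this split.

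The missing idea is to extract information at the bad primes as well, through the one coefficient whose reduction is uniform in $p$: the sum of roots. If $p\nmid F(0)$, then $\overline{F}\mid R_p$, and the $\sigma$-symmetry forces the roots of $\overline{F}$ to sum to $2d$. If $p\mid F(0)$, then $\overline{F}=x^{n-m}U$ with $U\mid R_p$, and the roots sum to $2\deg U=2(d-n+m)\equiv 2d-2n-1\pmod p$. So the integer $\widehat{\deg}\,F=2d-\sum_i\alpha_i$ is $\equiv 0$ or $\equiv 2n+1\pmod p$ in the two cases, whence $\Pi_n\mid\widehat{\deg}\,F\,(2n+1-\widehat{\deg}\,F)$. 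For $d\le n/2$ this product has absolute value $<3n(n+1)<\Pi_n$, so $\widehat{\deg}\,F\in\{0,2n+1\}$. Since $\widehat{\deg}\,F+\widehat{\deg}\,G=2n+1$, after swapping you get $\widehat{\deg}\,F=0$. Because $p\nmid 2n+1$ for every $p\mid\Pi_n$, \emph{every} such $p$ is then good for $F$. Only now does the size argument go through: $\Pi_n\mid G(0)$ gives $4^{e}>\Pi_n>2^{6n/5}$, hence $d<2n/5$. Separately, your final bound $2\cdot 9^{d}$ is too crude against $\Pi_n>2^{6n/5}$, since $9^{2/5}>2^{6/5}$. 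The paper instead centers at the symmetry point and uses $F(x+2)$, whose roots have modulus $<6$, so its odd-index coefficients are below $7^{d}<7^{2n/5}<2^{6n/5}$.
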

Let $W(x)\coloneqq x^2-4x+16$, and let $\alpha$ be a root of $F$.
\cref{claim} implies that $4-\alpha$ is also a root of $F$.
Set $a\coloneqq 16-|\alpha|^2$ and $b\coloneqq 16-|4-\alpha|^2$.
By \cref{prop:location}, we have $0<a,b<1$.
A direct calculation shows that
\[
|W(\alpha)|^2=a^2-ab+b^2,
\]
and hence $|W(\alpha)|<1$.

Every root of $F(x)$ has absolute value less than $4$, whereas the roots
$2\pm 2\sqrt{3}i$ of $W(x)$ have absolute value $4$.
Thus, $F(x)$ and $W(x)$ have no common root.
Let $\alpha_1,\dots,\alpha_{\deg F}$ be the roots of $F$, counted with multiplicity (in fact, all roots are simple).
Since $F(x)$ is monic, we obtain
\[
0<|\Res(F,W)|=\prod_{i=1}^{\deg F}|W(\alpha_i)|<1.
\]
Here, $\Res(F,W)$ denotes the resultant of $F(x)$ and $W(x)$.
On the other hand, since $F(x), W(x)\in\bbZ[x]$, we have $\Res(F,W)\in\bbZ$, a contradiction.
This reduces the proof of \cref{thm:main} to \cref{claim}.
\section{Proof of the claim}
\subsection{Local functional equation}
For each prime $p$ and each polynomial $f(x)\in\bbZ[x]$, let $\overline{f}(x)\in\bbF_p[x]$ denote its reduction modulo $p$.
In this subsection, all roots are taken in an algebraic closure $\overline{\bbF_p}$ of $\bbF_p$.

Let $n\geq 320$, and let $p$ be a prime satisfying $n+1<p\leq 2n$.
Since $p\mid C_k$ for $\frac{p+1}{2}\leq k\leq n$, we have
\begin{equation}\label{eq:factorization}
\overline{Q_n}(x)=x^{n-\frac{p-1}{2}}\overline{Q_{\frac{p-1}{2}}}(x).
\end{equation}
For $0\leq k\leq \frac{p-1}{2}$, the congruence
\[
2C_k\equiv -(-4)^{k+1}\binom{\frac{p+1}{2}}{k+1}\pmod{p}
\]
holds, and hence
\[
2\overline{Q_{\frac{p-1}{2}}}(x)=x^{\frac{p+1}{2}}-(x-4)^{\frac{p+1}{2}}.
\]
This can also be deduced from Mattarei and Tauraso's result \cite[Theorem 3]{MattareiTauraso}.
It follows that
\[
\overline{Q_{\frac{p-1}{2}}}(x)
\bigl(x^{\frac{p+1}{2}}+(x-4)^{\frac{p+1}{2}}\bigr)=2(x^p+x-4).
\]
Therefore,
\[
\overline{Q_{\frac{p-1}{2}}}\mid x^p+x-4
\]
in $\bbF_p[x]$.
Since the derivative of $x^p+x-4$ is $1$, this polynomial has no multiple roots.
\begin{lemma}
Let $U(x)\in\bbF_p[x]$ be any monic divisor of $\overline{Q_{\frac{p-1}{2}}}(x)$.
Let $\gamma_1,\dots,\gamma_{\deg U}$ be the roots of $U(x)$.
Then
\begin{equation}\label{eq:local_fn_eq}
U(4-x)=(-1)^{\deg U}U(x)
\end{equation}
and
\begin{equation}\label{eq:loc_sum_of_roots}
\sum_{i=1}^{\deg U}\gamma_i=2\deg U.
\end{equation}
\end{lemma}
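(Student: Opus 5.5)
The plan is to show that the root set of $\overline{Q_{\frac{p-1}{2}}}$, and hence of any divisor $U$, is stable under the involution $\gamma\mapsto 4-\gamma$, with each root fixed by it. Both \eqref{eq:local_fn_eq} and \eqref{eq:loc_sum_of_roots} then follow at once.

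Let $\gamma$ be a root of $\overline{Q_{\frac{p-1}{2}}}$. From the divisibility $\overline{Q_{\frac{p-1}{2}}}\mid x^p+x-4$ established above, we get $\gamma^p+\gamma-4=0$, that is,
\[
\gamma^p=4-\gamma.
\]
In other words, the Frobenius map sends $\gamma$ to $4-\gamma$. Since $4\in\bbF_p$, applying Frobenius twice gives $\gamma^{p^2}=(4-\gamma)^p=4-\gamma^p=\gamma$, consistent with this.

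Now let $U$ be a monic divisor of $\overline{Q_{\frac{p-1}{2}}}$. A priori $U$ need not have coefficients stable under anything special, but $U\in\bbF_p[x]$, so its root set is stable under Frobenius. Hence $\gamma\mapsto 4-\gamma$ permutes the roots of $U$. The roots are simple, because $x^p+x-4$ is separable. Therefore
\[
U(4-x)=\prod_{i}(4-x-\gamma_i)=(-1)^{\deg U}\prod_i\bigl(x-(4-\gamma_i)\bigr)=(-1)^{\deg U}\prod_i(x-\gamma_i)=(-1)^{\deg U}U(x),
\]
which is \eqref{eq:local_fn_eq}.

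For \eqref{eq:loc_sum_of_roots}, pair each root $\gamma_i$ with $4-\gamma_i$. Summing over the permuted set gives
\[
\sum_i\gamma_i=\sum_i(4-\gamma_i)=4\deg U-\sum_i\gamma_i,
\]
so $2\sum_i\gamma_i=4\deg U$. Since $p$ is odd (indeed $p>n+1\ge 321$), we may divide by $2$ and obtain $\sum_i\gamma_i=2\deg U$ in $\bbF_p$. Here $\deg U$ is read as an element of $\bbF_p$, which is how the statement must be interpreted.

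The only point needing care is justifying that Frobenius preserves the roots of $U$, together with simplicity of the roots. Both are standard, given that $U$ has coefficients in $\bbF_p$ and divides a separable polynomial. There is no real obstacle here; the work was already done in deriving $\overline{Q_{\frac{p-1}{2}}}\mid x^p+x-4$.
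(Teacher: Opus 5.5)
Your proof is correct and follows the paper's argument essentially verbatim. You get $\gamma^p=4-\gamma$ from the divisibility by $x^p+x-4$, use the Frobenius-stability of the roots of $U\in\bbF_p[x]$ to see that the involution $\gamma\mapsto 4-\gamma$ permutes the simple roots of $U$, and then divide by $2$ using that $p$ is odd.

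The only slip is the phrase ``with each root fixed by it'' in your opening plan, which is false: the involution permutes the roots, and its only fixed point is $\gamma=2$. That phrase is never used in your actual argument.
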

\begin{proof}
Let $\gamma$ be a root of $U$.
Since $U(x)\in\bbF_p[x]$, we have
\[
U(\gamma^p)=U(\gamma)^p=0.
\]
Moreover, $U(x)\mid x^p+x-4$ implies that $\gamma^p=4-\gamma$.
Thus, $4-\gamma$ is also a root of $U(x)$.
Since the map $\gamma\mapsto 4-\gamma$ is an involution, it permutes the roots of $U$.
Thus,
\begin{align*}
U(4-x)&=\prod_{i=1}^{\deg U}(4-x-\gamma_i)=(-1)^{\deg U}\prod_{i=1}^{\deg U}\bigl(x-(4-\gamma_i)\bigr)\\
&=(-1)^{\deg U}\prod_{i=1}^{\deg U}(x-\gamma_i)=(-1)^{\deg U}U(x).
\end{align*}
Moreover,
\[
\sum_{i=1}^{\deg U}\gamma_i=\sum_{i=1}^{\deg U}(4-\gamma_i)=4\deg U-\sum_{i=1}^{\deg U}\gamma_i.
\]
Since $p$ is odd, this gives \eqref{eq:loc_sum_of_roots}.
\end{proof}
\subsection{Lifting the local functional equations}
As in \cref{sec:reduction}, let $n\geq 320$, and suppose that $Q_n(x)=F(x)G(x)$ with nonconstant monic polynomials $F(x),G(x)\in\bbZ[x]$.
Let $\alpha_1,\dots,\alpha_{\deg F}$ and $\beta_1,\dots,\beta_{\deg G}$ be the complex roots of $F(x)$ and $G(x)$, respectively, counted with multiplicity.
Set
\[
\widehat{\deg}\,F\coloneqq 2\deg F-\sum_{i=1}^{\deg F}\alpha_i,\quad \widehat{\deg}\,G\coloneqq 2\deg G-\sum_{j=1}^{\deg G}\beta_j.
\]
Since $F(x)$ and $G(x)$ are monic with integer coefficients, their sums of roots, and hence $\widehat{\deg}\,F$ and $\widehat{\deg}\,G$, are integers.
\begin{claim}\label{claim:deg-equiv}
Let $p$ be a prime satisfying $n+1<p\leq 2n$.
If $p\nmid F(0)$, then
\[
\widehat{\deg}\,F\equiv 0\pmod{p},
\]
whereas if $p\mid F(0)$, then
\[
\widehat{\deg}\,F\equiv 2n+1\pmod{p}.
\]
The same assertion holds for $G$.
\end{claim}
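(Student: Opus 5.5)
Reduce $F$ modulo $p$. Since $F$ is monic, the sum of its roots is minus the $x^{\deg F-1}$ coefficient, and this coefficient is preserved under reduction. Hence $\widehat{\deg}\,F \bmod p$ equals $2\deg F$ minus the sum of the roots of $\overline{F}$ in $\overline{\bbF_p}$, counted with multiplicity.

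By \eqref{eq:factorization}, $\overline{F}\,\overline{G}=x^{n-\frac{p-1}{2}}\overline{Q_{\frac{p-1}{2}}}(x)$. The factor $\overline{Q_{\frac{p-1}{2}}}$ divides $x^p+x-4$, which does not vanish at $0$, so $x\nmid \overline{Q_{\frac{p-1}{2}}}$. Since $\overline{\bbF_p}[x]$ is a UFD and all the polynomials are monic, this gives $\overline{F}=x^{m}U(x)$ for some $m\ge 0$ and some monic divisor $U$ of $\overline{Q_{\frac{p-1}{2}}}$. Similarly $\overline{G}=x^{m'}V(x)$, with $m+m'=n-\frac{p-1}{2}$ and $UV=\overline{Q_{\frac{p-1}{2}}}$.

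The roots of $\overline{F}$ are then $0$ with multiplicity $m$, together with the roots of $U$. By \eqref{eq:loc_sum_of_roots} the roots of $U$ sum to $2\deg U$, so
\[
\widehat{\deg}\,F\equiv 2\deg F-2\deg U=2m \pmod p.
\]
It remains to determine $m$ in the two cases.

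\emph{Case $p\nmid F(0)$.} Then $\overline{F}(0)\neq0$, so $m=0$ and $\widehat{\deg}\,F\equiv 0\pmod p$.

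\emph{Case $p\mid F(0)$.} Then $m\ge1$. The key step is to show that all the zeros at $0$ go to one factor, so that $m'=0$. We have $Q_n(0)=C_n$, and $p$ divides $C_n$ exactly once: $p\le 2n<2p$ appears once in $(2n)!$, and $p>n+1$ means $p$ divides neither $n!$ nor $(n+1)!$. Since $F(0)G(0)=C_n$, $p$ cannot divide both $F(0)$ and $G(0)$. So $p\nmid G(0)$, hence $m'=0$ and $m=n-\frac{p-1}{2}$. This gives
\[
\widehat{\deg}\,F\equiv 2n-p+1\equiv 2n+1\pmod p.
\]

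The argument is symmetric in $F$ and $G$, so the same statement holds for $G$. The one non-formal step is the observation that $v_p(C_n)=1$, which forces $x^{n-\frac{p-1}{2}}$ to lie entirely in one factor; everything else is bookkeeping with the preceding lemma.
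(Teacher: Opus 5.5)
Your proof is correct and follows the paper's argument. You write $\overline{F}$ as a power of $x$ times a monic divisor of $\overline{Q_{\frac{p-1}{2}}}$, apply \eqref{eq:loc_sum_of_roots}, and use $v_p(C_n)=1$ to force all of $x^{n-\frac{p-1}{2}}$ into the factor whose constant term is divisible by $p$; packaging both cases as $\widehat{\deg}\,F\equiv 2m\pmod p$ and justifying $v_p(C_n)=1$ explicitly are minor presentational differences.
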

\begin{proof}
If $p\nmid F(0)$, then \eqref{eq:factorization} implies that $\overline{F}(x)\mid\overline{Q_{\frac{p-1}{2}}}(x)$.
Hence, by \eqref{eq:loc_sum_of_roots}, we obtain
\[
\sum_{i=1}^{\deg F}\alpha_i
\equiv \text{the sum of the roots of }\overline{F}(x)
=2\deg\overline{F}=2\deg F\pmod{p},
\]
which gives $\widehat{\deg}\,F\equiv 0\pmod{p}$.

If $p\mid F(0)$, then $p\nmid G(0)$, since $F(0)G(0)=C_n$ is divisible by $p$ but not by $p^2$.
Thus, $x^{n-\frac{p-1}{2}}\mid\overline{F}(x)$, and we may write $\overline{F}(x)=x^{n-\frac{p-1}{2}}U(x)$, where $U(x)\in\bbF_p[x]$ is monic and satisfies $U(x)\mid\overline{Q_{\frac{p-1}{2}}}(x)$.
Since $\deg U=\deg F-n+\frac{p-1}{2}$, it follows from \eqref{eq:loc_sum_of_roots} that
\[
\sum_{i=1}^{\deg F}\alpha_i\equiv \text{the sum of the roots of }U(x)=2\deg U\equiv 2\deg F-2n-1\pmod{p},
\]
which gives $\widehat{\deg}\,F\equiv 2n+1\pmod{p}$.
\end{proof}
\cref{claim:deg-equiv} gives
\begin{equation}\label{eq:Pi_n-division}
\Pi_n\mid \widehat{\deg}\,F \ \bigl(2n+1-\widehat{\deg}\,F\bigr).
\end{equation}
After interchanging $F(x)$ and $G(x)$ if necessary, we may first assume
that $\deg F\leq n/2$.
Since $|\alpha_i|<4$ by \cref{prop:location}, we have
\[
\left|\sum_{i=1}^{\deg F}\alpha_i\right|<4\deg F\leq 2n.
\]
Thus, $-n<\widehat{\deg}\,F<3n$, and \cref{prop:Pi_n-ineq} yields
\[
|\widehat{\deg}\,F \ (2n+1-\widehat{\deg}\,F)|<3n(n+1)<2^{6n/5}<\Pi_n.
\]
Combining this estimate with the divisibility relation \eqref{eq:Pi_n-division}, we obtain
$\widehat{\deg}\,F \ (2n+1-\widehat{\deg}\,F)=0$.
Hence, either $\widehat{\deg}\,F=0$ or $\widehat{\deg}\,F=2n+1$.
Since $\deg F+\deg G=n$ and
\[
\sum_{i=1}^{\deg F}\alpha_i+\sum_{j=1}^{\deg G}\beta_j=-C_1=-1,
\]
we have $\widehat{\deg}\,F+\widehat{\deg}\,G=2n+1$.
Thus, if $\widehat{\deg}\,F=0$, then $\widehat{\deg}\,G=2n+1$, whereas if $\widehat{\deg}\,F=2n+1$, then $\widehat{\deg}\,G=0$.
Consequently, after interchanging $F(x)$ and $G(x)$ once more if necessary, we may assume that
\[
\widehat{\deg}\,F=0,\qquad\widehat{\deg}\,G=2n+1.
\]
We now prove \eqref{eq:forced_fn_eq} for this choice of $F$.

If $p$ is a prime divisor of $\Pi_n$, then $p\nmid 2n+1$, since $p<2n+1<2p$.
By \cref{claim:deg-equiv}, we have $p\mid G(0)$ for every such $p$, and hence $\Pi_n\mid G(0)$.
Moreover, $G(0)\neq 0$ since $F(0)G(0)=C_n\neq 0$.
By \cref{prop:location} and \cref{prop:Pi_n-ineq}, we obtain
\[
4^{\deg G}>|G(0)|\geq \Pi_n>4^{3n/5}.
\]
It follows that $\deg G>3n/5$, and hence $\deg F<2n/5$.

Set
\[
H(x)\coloneqq F(x+2)=x^{\deg F}+a_1x^{\deg F-1}+\cdots+a_{\deg F-1}x+a_{\deg F}.
\]
It remains to prove that $H(-x)=(-1)^{\deg F}H(x)$, which is equivalent to showing that $a_i=0$ for every odd $i$ with $1\leq i\leq\deg F$.

Let $p$ be a prime divisor of $\Pi_n$.
By \cref{claim:deg-equiv}, we have $p\nmid F(0)$, and hence $\overline{F}(x)\mid\overline{Q_{\frac{p-1}{2}}}(x)$.
It follows from \eqref{eq:local_fn_eq} that $\overline{H}(-x)=(-1)^{\deg F}\overline{H}(x)$.
Since $p$ is odd, this implies that $a_i\equiv 0\pmod{p}$ for every odd $i$.
Thus, $\Pi_n\mid a_i$ for every odd $i$.
By \cref{prop:location}, every root of $H(x)$ has absolute value less than $6$.
Hence, using \cref{prop:Pi_n-ineq}, we obtain
\[
|a_i|<\binom{\deg F}{i}6^i<(1+6)^{\deg F}<7^{2n/5}<2^{6n/5}<\Pi_n.
\]
Therefore, $a_i=0$ for every odd $i$ with $1\leq i\leq\deg F$.
This proves \cref{claim} and completes the proof of \cref{thm:main}.
\subsection*{Use of AI}
The author used ChatGPT, powered by GPT-6 Astra Pro, during the development of this work.
The key idea and an initial version of the proof of \cref{thm:main} were suggested by the model.
Through continued dialogue with ChatGPT, the author worked through the argument in detail and reorganized the proof and its exposition for clarity.
The author also used ChatGPT to formulate and refine the English prose of the manuscript based on content specified by the author.
The author takes full responsibility for all claims, proofs, and text in this paper.


\begin{thebibliography}{9}
\bibitem{Coleman}
R. F. Coleman, \emph{On the Galois groups of the exponential Taylor polynomials}, Enseign.~Math.~\textbf{33} (1987), 183--189.
\bibitem{KatzRivin}
N. M. Katz, I. Rivin, \emph{Speculations in Galois theory}, in \emph{Algebraic Geometry and the Langlands Program: A Volume in Honour of G\'erard Laumon}, Simons Symposia, Springer, Cham, to appear.
\bibitem{MattareiTauraso}
S. Mattarei, R. Tauraso, \emph{From generating series to polynomial congruences}, J. Number Theory \textbf{182} (2018), 179--205.
\bibitem{RosserSchoenfeld}
J. B. Rosser, L. Schoenfeld, \emph{Approximate formulas for some functions of prime numbers}, Illinois J. Math.~\textbf{6} (1962), 64--94.
\bibitem{Schur1929}
I. Schur, \emph{Einige S\"atze \"uber Primzahlen mit Anwendungen
auf Irreduzibilit\"atsfragen. I}, Sitzungsber.~Preuss.~Akad. Wiss.~Phys.-Math.~Kl.~(1929), 125--136.
\bibitem{Schur1930}
I. Schur, \emph{Gleichungen ohne Affekt}, Sitzungsber.~Preuss. Akad.~Wiss.~Phys.-Math.~Kl.~(1930), 443--449.
\end{thebibliography}
\end{document}